\numberwithin{equation}{section} 
\newtheorem{theorem}{Theorem}[section]
\newtheorem{theorem*}{Theorem}
\newtheorem{lemma}[theorem]{Lemma}
\newtheorem{proposition}[theorem]{Proposition}
\newtheorem{corollary}[theorem]{Corollary}
\newtheorem{remark}[theorem]{Remark}
\newtheorem{remark*}[theorem*]{Remark}
\newtheorem{definition}[theorem]{Definition}
\def\R{{\mathbb R}}
\def\S{{\mathbb S}}
\def\cE{{\mathcal E}}
\def\cD{{\mathcal D}}
\def\cB{{\mathcal B}}
\def\cK{{\mathcal K}}
\def\1{\left(}
\def\2{\right)}
\def\3{\left\{}
\def\4{\right\}}
\def\8{\infty}
\def\sm{\setminus}
\def\ss{\subseteq}
\begin{document}
\title{Interior regularity for fractional systems }

\author[L. Caffarelli]{Luis Caffarelli}
\address{The University of Texas at Austin, Department of Mathematics, 2515 Speedway,
Austin, TX 78751, USA
}
\email{caffarel@math.utexas.edu}

\author[G. D\'avila]{Gonzalo D\'avila}
\address{Universidad T\'ecnica Federico Santa Mar\'ia, Departamento de Matem\'atica, Avenida Espa\~na 1680, Valpara\'iso, Chile}
\email{gonzalo.davila@usm.cl}

\begin{abstract}
We study the regularity of solutions of elliptic fractional systems of order $2s$, $s\in(0,1)$, where the right hand side $f$ depends on a nonlocal gradient and has the same scaling properties as the nonlocal operator. Under some structural conditions on the system we prove interior H\"older estimates in the spirit of \cite{Caffarelli-Systems}. Our results are stable in $s$ allowing us to recover the classic results for elliptic systems  due to S. Hildebrandt and K. Widman \cite{Hildebrandt-Widman} and M. Wiegner \cite{Wiegner}.
\end{abstract}

\maketitle


\section{Introduction}

In this work we are interested in the interior regularity of bounded weak or viscosity solutions of fractional systems of the form 
\[
-Lu_i=f_i(x,u,\cB_K(u,u)),  \ x\in\Omega, \ 1\leq i\leq m,
\]
where $\Omega\ss\R^n$ and $u=(u_1,...,u_m)$ is a vector valued function. Given $s\in(0,1)$ and a real-valued function $v$ our nonlocal operator $L$ is given by
\[
L_K(v)=\int_{\R^n}(v(y)-v(x))K(y-x)dy,
\]
where the kernel $K$ is assumed to be symmetric and to satisfy 
\[
(1-s)\frac{\lambda}{|y|^{n+2s}}\leq K(y) (1-s)\frac{\Lambda}{|y|^{n+2s}}.
\]
Finally $\cB_K$ stands for 
\[
\cB_K(u,w)=\frac{(1-s)c_n}{2}\int_{\R^n}(u(x)-u(y))\cdot(w(x)-w(y))K(y)dy,
\]
and is a fractional derivative of order $s$ playing the role of $\nabla u \nabla w$. This operator appears naturally when studying fractional harmonic maps to the sphere. We refer to Section \ref{sectionharmonicsphere} for more details on this operator and fractional harmonic maps into the sphere.

In the local case, the interior regularity results are due to S. Hildebrandt and K. Widman \cite{Hildebrandt-Widman} and M. Wiegner \cite{Wiegner}. More precisely they studied the regularity of weak solutions to systems of the form
\[
-D_\beta\left[A^{\alpha\beta}(x,u,\nabla u)D_\alpha u_i\right]=f_i(x,u,\nabla u),
\]
with a right hand side satisfying
\begin{itemize}
\item $|f(x,u,\nabla u)|\leq a|\nabla u|^2+b(x)$. (quadratic growth)
\item $u\cdot f(x,u,p)\leq a^*|p|^2+b^*(x)$.
\end{itemize}
for a pair of functions $b, b^*\in L^q$ with $q>n/2$.

They prove interior H\"older estimates under the structural assumption
\[
aM+a^*<2,
\] 
where $M=\left\|u\right\|_\infty$. The structural condition is necessary as the classic harmonic map with values into the sphere $u:=x/|x|:\R^n\to\S^{n-1}$ solves 
\[
(-\Delta)u=u|\nabla u|^2,
\]
and fails to be regular. Note that in this particular case the structural condition reads as $aM+a^*=2$, that is, we are outside the feasible range. 

The proof of S. Hildebrandt and K. Widman \cite{Hildebrandt-Widman} and M. Wiegner \cite{Wiegner} relies heavily on harmonic analysis and the use of the Green functions associated to the linear operators. The structural condition is somewhat hidden in the proof and there is little geometric insight on it. Also, since their proof relies on the divergence structure of the system it does not apply directly to viscosity solutions of systems of the form 
\begin{align}\label{nondiv}
a_{ij}u_{ij}=f(x,u,\nabla u).
\end{align}

A few years later this result was proved by L. Caffarelli in \cite{Caffarelli-Systems} with a completely different strategy. Here the main idea was to control the oscillation of the function using an auxiliary scalar equation. In his proof the structural condition has a geometric interpretation; when satisfied the solution of the system becomes a contraction mapping. This naturally allows to control the oscillation of the function and a standard iterative argument then leads to H\"older continuity of the solution. Moreover, even though not explicitly stated, the proof works also in the nondivergence setting \eqref{nondiv} and therefore the regularity results apply to viscosity solutions.

The paper is organized as follows. Some standard notation and definitions are given in Section \ref{sectiondef}. We also state our main result and the necessary hypotheses. In Section \ref{sectionharmonicsphere} we discuss briefly fractional harmonic maps into spheres. This is our main motivation for studying systems where the right hand side is dominated by the nonlocal operator $\cB$, since it plays the role of the gradient squared of classic harmonic maps. Then, in Section \ref{sectionnonlocalsystems} we prove our result in the spirit of \cite{Caffarelli-Systems}. Finally, Section \ref{sectionextension} is dedicated to extend the result to linear operators with symmetric kernels and discuss the passage to the limit $s\to1$. In this section we also give a counterexample of regularity when the structural condition is not satisfied.

\section{Definitions and statement of the main result}\label{sectiondef}

Let us introduce some standard notation and the notion of weak and viscosity solution. We point out that our method does not rely on the divergence form of the equation but rather on the Harnack inequality for nonlocal operators, which is available for both notions of solutions. Furthermore, since we are looking for a priori bounds, a standard regularization procedure allows us to assume that solutions are smooth.

We denote by $\cK(\lambda, \Lambda)$ the family of kernels $K$ such that $K(y)=K(-y)$ and
\begin{align}\label{boundK}
(1-s)\frac{\lambda}{|y|^{n+2s}}\leq K_i(y) (1-s)\frac{\Lambda}{|y|^{n+2s}},
\end{align}
with $s\in(0,1)$. When there is no chance of confusion we will simply denote $\cK(\lambda,\Lambda)$ by $\cK$. Given a kernel $K\in\cK$ we denote by $L_K$ the linear nonlocal operator given by
\[
L_K(v)=\text{P.V.}\int_{\R^n}(v(y)-v(x))K(y-x)dy,
\]
The last integral is well defined whenever the function $v$ is punctually $C^{1,1}$ and has an integrable tail. Note that since the kernel is symmetric the operator can be written as
\[
L_K(v)=\frac{1}{2}\int_{\R^n}(v(x+y)+v(x-y)-2v(x))K(y)dy.
\]

Given two smooth bounded maps $u,w:\R^n\to\R^m$ we define the bilinear form $\cB_K$ associated to the kernel $k\in\cK$, as in the introduction, by
\[
\cB_K(u,w)=\frac{1}{2}\int_{\R^n}(u(x)-u(y))\cdot(w(x)-w(y))K(y)dy,
\]
In the special case of the fractional Laplacian
\[
(-\Delta)^su=(1-s)c_n\text{P.V.}\int_{\R^n}\frac{u(x)-u(y)}{|x-y|^{n+2s}}dy
\]
we will denote it's associated bilinear form just by $\cB$. Here the constant $c_n$ is chosen such that $(-\Delta)^su\to(-\Delta)u$. For more details on this we refer to Section \ref{sectionharmonicsphere}.

Let $\Omega\subset\R^n$ be an open bounded set, we are interested in weak and/or viscosity solutions to the following system 
\begin{align}\label{generalsystem}
-L u_i=f_i(x,u,\cB_K(u,u)), \ x\in\Omega.
\end{align}
Here $f_i:\R^n\times\R^m\to\R$ is a bounded function and we will usually denote by $f=(f_1,...,f_m)$ the associated map. We define now weak solutions.
\begin{definition}
A map  $u=(u_i)\in  H^s(\Omega;\R^m)\cap L^\infty(\R^n)$ is a weak solution to the nonlocal system \eqref{generalsystem} if for all test maps $\varphi$ we have 
\[
\iint_{\R^{2n}}(v(x)-v(y))(\varphi(x)-\varphi(y))K(y-x)dydx=\int_{\R^n}f_i(x,u,\cB_K(u,u))\varphi(x)dx,
\]
for all $i=1,...,m$.
\end{definition}
Now, we say that $\varphi$ touches $u$ by above (below) in a neighborhood $N$ if
\begin{itemize}
\item $\varphi(x)=u(x)$ for $x\in\Omega$.
\item $\varphi(y)>u(y)$ ($\varphi(y)<u(y)$) in a neighborhood $N$.
\end{itemize}
We can define now viscosity solutions.
\begin{definition}
A function $u:\R^n\to\R^m$ upper (lower) semicontinuous in $\bar\Omega$ is a viscosity subsolution (supersolution) to \eqref{generalsystem}if anytime a smooth map $\varphi$ touches $u$ by above (below) then 
\[
u_\varphi(x)=\begin{cases} \varphi(x) &\mbox{if } x\in\Omega \\ 
u(x) & \mbox{if } x\in\R^n\sm\Omega \end{cases}
\]
satisfies
\[
Lu_\varphi\leq (\geq) f(x,u_\varphi,,\cB_K(u_\varphi,u_\varphi)).
\]
A viscosity solution $u$ is both a viscosity subsolution and supersolution.
\end{definition}
In order to obtain regularity for solutions to \eqref{generalsystem} we need to impose some growth conditions on the right hand. The hypotheses needed are the following.
\begin{itemize}
\item[(H1.1)] {\it Small $2s$ growth} There are constants $a$ and $b$ such that
\[
|f(x,u(x),\cB(v(y),v(y)))|\leq a\cB(v(y),v(y))+b
\]
for all smooth maps $v:\R^n\to\R^m$ and $x,y\in \Omega$. 
\item[(H1.2)] There are constants $a^*$ and $b^*$ such that
\[
u(x)\cdot f(x,u(x),\cB(v(y),v(y)))\leq a^*\cB(v(y),v(y))+b^*,
\]
 For all smooth maps $v:\R^n\to\R^m$ and $x,y\in \Omega$.
\item[(H2)] $\|u\|_{L^\infty(\Omega)}\leq M$.
\end{itemize}
Note that hypotheses (H1) and (H1.1) are the nonlocal analogous to the conditions imposed in \cite{Hildebrandt-Widman} and \cite{Caffarelli-Systems}.

We point out that the size of the solution plays a relevant role in the regularity. In the local, case harmonic maps into the unitary sphere are not regular ($M=1$) but they are when the target domain is  some compact subset of the open ball as seen in the paper by S. Hildebrandt, H. Kaul and K. Widman \cite{H-K-W}. 

Now we are in shape to state our main result.
\begin{theorem}\label{maintheorem}
Let $u$ be a weak or viscosity solution to \eqref{generalsystem} and assume that hypotheses H1, H1.1 and H2 hold. Furthermore assume that the structural condition $aM+a^*<2$, then there exists $\alpha$ depending only on $\lambda$, $\Lambda$ and the dimension $n$ such that $u\in C^\alpha(\Omega';\R^m)$ where $\Omega'$ is any open such that $\bar\Omega'\subset\Omega$. 
\end{theorem}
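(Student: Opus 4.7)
The plan is to adapt Caffarelli's strategy from \cite{Caffarelli-Systems} to the nonlocal setting: construct an auxiliary nonnegative scalar supersolution in which the structural condition $aM+a^{*}<2$ appears naturally, apply the nonlocal weak Harnack inequality (available and uniform in $s$) to deduce oscillation decay on dyadic subballs, and iterate. The rescaling $\tilde u(x)=\lambda^{-1}u(rx)$ sends $(a,a^{*},M)$ to $(a\lambda,a^{*},M/\lambda)$ and preserves both the ellipticity class $\cK(\lambda,\Lambda)$ and the quantity $aM+a^{*}$, so the structural condition is scale invariant and a single a priori oscillation-decay estimate on the unit ball suffices. By the regularization comment of Section \ref{sectiondef} I may assume the solution is smooth and interpret the equation pointwise.

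The key identity is the pointwise chain rule
\[
L_K\!\left(\tfrac{1}{2}|u|^2\right) \;=\; u\cdot L_K u \;+\; \cB_K(u,u),
\]
coming from $|u(y)|^2-|u(x)|^2 = 2u(x)\cdot(u(y)-u(x))+|u(y)-u(x)|^2$ and the symmetry of $K$; it is the nonlocal analogue of $\Delta(\tfrac{1}{2}|u|^2)=u\cdot\Delta u+|\nabla u|^2$. Combining with the equation $L_K u=-f$ and (H1.2) yields
\[
-L_K\phi \;=\; L_K\!\left(\tfrac{1}{2}|u|^2\right) \;=\; -u\cdot f + \cB_K(u,u) \;\geq\; (1-a^{*})\cB_K(u,u) - b^{*},
\]
where $\phi:=\tfrac{1}{2}(M^2-|u|^2)\geq 0$ by (H2). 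For each component $i$ and each sign $\sigma\in\{\pm1\}$ the function $v_i^{\sigma}:=M+\sigma u_i\geq 0$ satisfies $|L_K v_i^{\sigma}|\leq a\cB_K(u,u)+b$ by (H1.1), so the combined auxiliary $W_i^{\sigma}:=v_i^{\sigma}+\mu\phi\geq 0$ obeys
\[
-L_K W_i^{\sigma}\;\geq\;\bigl(\mu(1-a^{*})-a\bigr)\cB_K(u,u)\;-\;(b+\mu b^{*}).
\]
I would choose $\mu$ depending on $a,a^{*},M$ so that the coefficient of $\cB_K(u,u)$ is nonnegative while $\|W_i^{\sigma}\|_{L^\infty}$ stays favorable under the dyadic rescaling; this is the point where the structural condition $aM+a^{*}<2$ is used in its full strength.

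Once $W_i^{\sigma}$ is a bona fide nonnegative nonlocal supersolution of a linear equation with bounded right-hand side, I apply the $s$-uniform weak Harnack inequality for the class $\cK(\lambda,\Lambda)$ to the family $\{W_i^{\sigma}\}_{i,\sigma}$ on $B_1$. Either some $W_i^{\sigma}$ stays bounded below by a universal $c_0>0$ on $B_{1/2}$, which forces the corresponding component to stay away from one of its extreme values and produces $\osc_{B_{1/2}}u_i\leq(1-\eta)\osc_{B_1}u_i$, or the distribution function of every $W_i^{\sigma}$ is small on $B_1$, which via a standard point estimate gives the analogous quantitative improvement componentwise. Iterating at geometric scales, with the scale invariance of $aM+a^{*}$ keeping the constants uniform, produces Hölder continuity with exponent $\alpha$ depending only on $\lambda,\Lambda,n$; the uniformity in $s$ also delivers the passage to $s\to 1$ advertised in the abstract.

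I expect the main obstacle to be the actual construction of the correct $W_i^{\sigma}$. The naive choice $\mu=a/(1-a^{*})$ kills the $\cB_K$-coefficient only when $a^{*}<1$, whereas the hypothesis $aM+a^{*}<2$ still permits $a^{*}\in[1,2)$. A refined aux will likely couple $v_i^{\sigma}$ and $\phi$ through a mild nonlinearity (e.g.\ $(v_i^{\sigma})^p$ for some $p>1$, or a convex combination weighted by $M$) so that the coefficient of $\cB_K(u,u)$ in the resulting equation remains nonnegative throughout the admissible range while the $L^\infty$ bound of $W_i^{\sigma}$ is controlled on each dyadic rescaling. Once that aux is identified, the remainder---nonlocal weak Harnack, dyadic iteration, and the $s\to1$ limit---is by now standard and fits the outline in the introduction.
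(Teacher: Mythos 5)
Your plan correctly identifies the two nonlocal analogues that drive the argument (the chain-rule identity for $L_K(|u|^2)$ and the $s$-uniform weak Harnack inequality) and the need for an auxiliary scalar supersolution, which is indeed the strategy of the paper. But the gap you flag at the end is real, and the fix is not an exotic nonlinearity in the auxiliary: it is a sharper use of the hypotheses that you already have. You only invoked (H1.2) to get $-L_K\phi\ge(1-a^{*})\cB_K(u,u)-b^{*}$, hence the apparent restriction $a^{*}<1$. But (H1.1) together with (H2) gives the second bound $u\cdot f\le|u|\,|f|\le M\bigl(a\cB_K(u,u)+b\bigr)$, so in fact
\[
u\cdot f\;\le\;\min\{aM,\;a^{*}\}\,\cB_K(u,u)+\text{const}\;\le\;l\,\cB_K(u,u)+\text{const},\qquad l:=\tfrac12(aM+a^{*}),
\]
and $l<1$ is exactly the structural condition. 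With this $l$ in place, $-L_K\bigl(\tfrac12(M^2-|u|^2)\bigr)\ge(1-l)\cB_K(u,u)-\text{const}$, and (after normalizing $a=1$ by the scaling you describe) one can couple it with the linear term $\rho\cdot u$ for any vector $\rho$ with $|\rho|\le 1-l$: the function $h=\tfrac12 M^2+(1-l)M-\tfrac12|u|^2-\rho\cdot u-\text{(corrector for }b,b^{*})$ is nonnegative and satisfies $-L_Kh\ge 0$. This is the paper's Lemma 4.2/4.4 and covers the whole range $a^{*}\in(0,2)$.

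There is a second, subtler difference. You proposed coordinate directions $\rho=\sigma e_i/\mu$ and componentwise oscillation decay, but this loses the geometry that makes the iteration close. The paper instead applies Harnack for \emph{all} $\rho$ with $|\rho|\le 1-l$ simultaneously, and then optimizes by taking $\rho$ parallel to $u(x)$; the resulting inequality packages into a ball containment $|u-\delta\bar u|^2\le M^2(1-\delta)^2$ on $B_1$, where $\bar u=\fint_{B_1}u$. This ``contraction'' statement is the correct object to iterate: if one shifts by $\delta\bar u$, the new $L^\infty$ bound drops to $M(1-\delta)$ while the effective $a^{*}$ increases by at most $a\delta|\bar u|$, and these two moves cancel exactly in $aM+a^{*}$. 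Componentwise shifts do not have this cancellation, so your induction hypothesis would degrade. In short: use the combined bound $u\cdot f\le l\,\cB_K(u,u)+\text{const}$, replace the family $\{W_i^\sigma\}$ by the one-parameter family $h_\rho$, $|\rho|\le 1-l$, and extract the ball-containment rather than a componentwise oscillation bound before iterating.
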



\section{Fractional harmonic maps into the sphere}\label{sectionharmonicsphere}

In \cite{Millot-Sire} V. Millot and Y. Sire studied solutions to the Ginzburg-Landau system
\begin{align*}
(-\Delta)^{1/2}v_\varepsilon&=\frac{1}{\varepsilon}(1-|v_\varepsilon|^2) \ \ \text{in }\Omega\\
v_\varepsilon&=g, \ \ \text{in }\R^n\sm\Omega,
\end{align*}
where $g:\R^n\to\R^m$ is a smooth function with $|g|=1$ in $\R^n\sm\Omega$ and $\Omega\subset\R^n$ be a open bounded set. Among several properties of such solutions they studied the limit as $\varepsilon\to0$ and proved that they converged weakly to sphere valued 1/2-harmonic maps. The limiting equation had a right hand side that involves the bilinear form $\cB$ of order 1/2, as introduced in the previous section. 

Following the construction done by V. Millot and Y. Sire in the case of fractional 1/2 maps, we study fractional harmonic maps into spheres of general order $s$. We will also borrow the notation from \cite{Millot-Sire}.

Let $\Omega\subset\R^n$ be a open bounded set and $u\R^n\to\R^m$ a smooth function. We define the fractional Laplacian of order $s$ by 
\[
(-\Delta)^s u=(1-s)c_{n}\text{P.V.}\int_{\R^n}\frac{u(x)-u(y)}{|x-y|^{n+2s}}dy,
\]
where $c_{n}$ is a normalizing constant such that for smooth functions $(-\Delta)^s\to(-\Delta)$ (see for examples \cite{Hitchhikers}). We will usually refer to $c_{n,s}=(1-s)c_n$. Now we can define the action of the operator by
\begin{align*}
\langle (-\Delta)^s u,\varphi \rangle_\Omega=&\frac{c_{n,s}}{2}\iint_{\Omega\times\Omega}\frac{(u(x)-u(y))(\varphi(x)-\varphi(y))}{|x-y|^{n+2s}}dxdy\\
&+c_{n,s}\iint_{\Omega\times(\R^n\sm\Omega)}\frac{(u(x)-u(y))(\varphi(x)-\varphi(y))}{|x-y|^{n+2s}}dxdy,
\end{align*}
where $\varphi\in\cD(\Omega, \R^m)$. Define the $s$-energy $\cE_s$ by
\begin{align}\label{energy}
\cE_s(u,\Omega)=&\frac{c_{n,s}}{4}\iint_{\Omega\times\Omega}\frac{|u(x)-u(y)|^2}{|x-y|^{n+2s}}dxdy\\
&+\frac{c_{n,s}}{2}\iint_{\Omega\times(\R^n\sm\Omega)}\frac{|u(x)-u(y)|^2}{|x-y|^{n+2s}}dxdy,
\end{align} 
and note that the action by the fractional operator defines a distribution on $\Omega$ when 
\[
\cE_s(u,\Omega)<\infty,
\]
and $u\in L^2_{loc}(\R^n;\R^m)$. In this case we say that $u$ is admissible and $(-\Delta)^s$ belongs to $H^{-s}(\Omega;\R^m)$. For more details on fractional Sobolev spaces and the action of the fractional Laplacian we refer to \cite{Hitchhikers}.

We can introduce now the notion of $s$-harmonic maps with values into the sphere.
\begin{definition}
Let $u\in \hat H^s(\Omega;\R^m)=\{u\in L^2_{loc}(\R^n;\R^m) \text{ s.t. } \cE_s(u,\Omega)<\infty \}$ be such that $|u|=1$ a.e. in $\Omega$. We say that $u$ is weakly $s$-harmonic into $\S^{m-1}$ in $\Omega$ if
\begin{align}\label{s-harmonic}
\left[\frac{d}{dt}\cE_s\left(\frac{u+t\varphi}{|u+t\varphi|},\Omega\right)\right]_{t=0}=0,
\end{align} 
for all $\varphi\in H^s_{00}(\Omega;\R^m)\cap L^{\infty}(\Omega)$. Here
\[
H^s_{00}(\Omega;\R^m)=\{u\in H^s(\R^n;\R^m) \text{ s.t. } u=0 \text{ a.e. in } \R^n\sm\Omega\}.
\]
\end{definition}
The next proposition is just the variational formulation of the Euler-Lagrange equation of \eqref{s-harmonic}.
\begin{proposition}\label{equivalentharmonicity}
Let $u\in\hat H^s(\Omega;\R^m)$ with $|u|=1$ a.e. in $\Omega$. Then $u$ is a weak $s$-harmonic map into $\S^{m-1}$ in $\Omega$ if and only if
\begin{align}\label{harmoniceq}
\langle(-\Delta)^su,\varphi\rangle_\Omega=0,
\end{align}
for all $\varphi\in H^{s}_{00}(\Omega;\R^m)$ satisfying $\varphi(x)\in T_{v(x)}\S^{m-1}$ a.e. in $\Omega$.
\end{proposition}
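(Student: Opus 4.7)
The plan is to derive the Euler--Lagrange equation of \eqref{s-harmonic} by explicitly computing the constrained variation. Since $|u|=1$ a.e.\ in $\Omega$, the chain rule gives
\[
\psi(x):=\left[\frac{d}{dt}\frac{u+t\varphi}{|u+t\varphi|}\right]_{t=0}=\varphi(x)-(u(x)\cdot\varphi(x))u(x),
\]
and pointwise $u\cdot\psi=u\cdot\varphi-(u\cdot\varphi)|u|^2=0$, so $\psi(x)\in T_{u(x)}\S^{m-1}$ a.e.\ in $\Omega$. For $\varphi\in H^s_{00}(\Omega;\R^m)\cap L^\infty(\Omega)$, the product $(u\cdot\varphi)u$ lies again in $H^s_{00}(\Omega;\R^m)\cap L^\infty(\Omega)$ by the usual algebra property of $H^s\cap L^\infty$ together with $|u|=1$ in $\Omega$ and $\varphi\equiv 0$ outside $\Omega$, so $\psi$ is a legitimate tangential test map.

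Next I would differentiate the energy. Setting $u_t=(u+t\varphi)/|u+t\varphi|$, the admissibility of $u$ (i.e.\ $\cE_s(u,\Omega)<\infty$) together with the $L^\infty$ bounds on $u$ and $\varphi$ allows dominated convergence in the two integrals in \eqref{energy} and shows that $t\mapsto\cE_s(u_t,\Omega)$ is $C^1$ in a neighborhood of $t=0$. Expanding the integrand, using bilinearity and the symmetry of the kernel, a direct calculation yields
\[
\left[\frac{d}{dt}\cE_s(u_t,\Omega)\right]_{t=0}=\langle(-\Delta)^s u,\psi\rangle_\Omega,
\]
with $\langle\cdot,\cdot\rangle_\Omega$ as in the definition of the action of $(-\Delta)^s$. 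Hence \eqref{s-harmonic} is equivalent to $\langle(-\Delta)^s u,\psi\rangle_\Omega=0$ for every $\psi$ obtained as the tangential projection of some $\varphi\in H^s_{00}(\Omega;\R^m)\cap L^\infty(\Omega)$.

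For the converse direction, note that if $\varphi\in H^s_{00}(\Omega;\R^m)$ satisfies $\varphi(x)\in T_{u(x)}\S^{m-1}$ a.e.\ in $\Omega$, then $u\cdot\varphi=0$ a.e., so $\varphi$ equals its own tangential projection and is therefore in the range of the map $\varphi\mapsto\varphi-(u\cdot\varphi)u$. A mild density/truncation argument handles the distinction between $L^\infty$ test functions and general $H^s_{00}$ tangential test functions. Combining the two directions gives the claimed equivalence.

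The main obstacle I anticipate is the regularity bookkeeping: ensuring that $(u\cdot\varphi)u$ and $u_t$ stay in the correct function space, and justifying the $C^1$ dependence of $\cE_s$ on $t$ (especially in the cross term $\Omega\times(\R^n\setminus\Omega)$, where $u$ need not be bounded far from $\Omega$, so one must control tails using the $H^s_{00}$ support condition on $\varphi$). Once those points are settled, identifying $[\tfrac{d}{dt}\cE_s(u_t,\Omega)]_{t=0}$ with $\langle(-\Delta)^s u,\psi\rangle_\Omega$ is just bilinearity and symmetrization of the kernel.
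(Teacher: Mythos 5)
Your proof is correct and follows essentially the same strategy as the paper: compute the tangential projection $\psi=\varphi-(u\cdot\varphi)u$ as the Gateaux derivative of $(u+t\varphi)/|u+t\varphi|$, justify differentiating under the integral by dominated convergence, and identify $\big[\tfrac{d}{dt}\cE_s(u_t,\Omega)\big]_{t=0}$ with $\langle(-\Delta)^s u,\psi\rangle_\Omega$. Your presentation is actually slightly cleaner than the paper's, since you compute the derivative $\psi$ once for a general (not necessarily tangential) $\varphi$ and then read off both directions of the equivalence from the single identity, whereas the paper carries out a Taylor expansion of $(u+t\varphi)/|u+t\varphi|$ separately in each direction.
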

The proof is the same as in \cite{Millot-Sire} but we include it for completeness.
\begin{proof}
Suppose first that $u$ is a weakly $s$-harmonic map and let $\varphi\in H^s_{00}(\Omega;\R^m)$ be such that $\varphi\cdot u$ a.e. in $\Omega$. Without loss of generality we can assume that $\varphi$ is compactly supported in $\Omega$ and that is bounded. Then we can estimate
\[
\frac{v(x)+t\varphi(x)}{|v(x)+t\varphi(x)|}=\frac{v(x)+t\varphi(x)}{\sqrt{1+t^2|\varphi^2(x)|}}=v(x)+t\varphi(x)+O(t^2),
\]
as $t\to 0$.  A direct application of the dominated convergence theorem let us deduce then 
\[
\left[\frac{d}{dt}\cE_s\left(\frac{u+t\varphi}{|u+t\varphi|},\Omega\right)\right]_{t=0}=\langle(-\Delta)^su,\varphi\rangle_\Omega,
\]
and therefore, since $u$ satisfies \eqref{s-harmonic}, $\langle(-\Delta)^su,\varphi\rangle_\Omega=0$.

Suppose now that $u\in \hat H^s(\Omega;\R^m)$ satisfies \eqref{harmoniceq} and let $\varphi\in\cD(\Omega;\R^m)$. Note that $(\varphi\cdot u)\in H^s_{00}(\Omega;\R^m)$ and therefore 
\[
\phi=\varphi-(\varphi\cdot u)\varphi
\]
belongs to $H^s_{00}(\Omega;\R^m)$. Since $|u|=1$ we have also $\phi\cdot u=0$ a.e. in $\Omega$. As before, we can rewrite
\[
\frac{v(x)+t\varphi(x)}{|v(x)+t\varphi(x)|}=\frac{v(x)+t\varphi(x)}{\sqrt{1+t^2|\varphi^2(x)|}}=v(x)+t\varphi(x)+O(t^2),
\]
as $t\to 0$. Again by dominated converge we conclude
\[
\left[\frac{d}{dt}\cE_s\left(\frac{u+t\varphi}{|u+t\varphi|},\Omega\right)\right]_{t=0}=\langle(-\Delta)^su,\varphi\rangle_\Omega,
\]
and by \eqref{harmoniceq} we conclude that \eqref{s-harmonic} holds.
\end{proof}
As in \cite{Millot-Sire} we note that thanks to the previous proposition the Euler-Lagrange equation can be rewritten as
\begin{align}\label{orthogonalityeq}
(-\Delta)^su\bot T_v\S^{m-1}, \ \text{in } H^{-s}(\Omega).
\end{align}
Note that equation \eqref{orthogonalityeq} is the nonlocal analogous of the classical harmonic map system. In the classical case ($s=1$), equation \eqref{orthogonalityeq} is equivalent to the unrestricted system
\begin{align}\label{localsystem}
(-\Delta)u=u|\nabla u|^2.
\end{align}
In the nonlocal case, we can derive a similar system of equations when the target domain is the sphere. More precisely, let $u$ be such that $|u|=1$ a.e. in $\Omega$ and $\varphi\in\cD(\Omega;\R^m)$. Thanks to Proposition \ref{equivalentharmonicity} we have
\[
\langle(-\Delta)^s u,\varphi\rangle_\Omega=\langle(-\Delta)^s u,(\varphi\cdot u)u \rangle\Omega.
\] 
Note now that since $|u|=1$ a.e. in $\Omega$ we have the following identity
\begin{align*}
(u(x)-u(y))\cdot((\varphi(x)\cdot u(x))u(x)-(\varphi(y)\cdot u(y))u(y))\\
=\frac{1}{2}|u(x)-u(y)|^2(\varphi(x)\cdot u(x)+\varphi(y)\cdot u(y)),
\end{align*}
and therefore we get
\[
\langle(-\Delta)^s u,(\varphi\cdot u)u \rangle\Omega=\frac{c_{n,s}}{2}\iint_{\Omega\times\R^n}\frac{|u(x)-u(y)|^2}{|x-y|^{n+2s}}u(x)\cdot\varphi(x)dxdy.
\]
The previous identity is equivalent then to the system
\begin{align}\label{harmonicsystem}
(-\Delta)^s u=u(x)\frac{c_{n,s}}{2}\int_{\R^n}\frac{|u(x)-u(y)|^2}{|x-y|^{n+2s}}
\end{align}
in $\cD'(\Omega)$. Note the similarity between this system and the one found in the local case \eqref{localsystem}. Furthermore note that since the constant $c_{n,s}=(1-s)c_n$ then
\[
\frac{c_{n,s}}{2}\int_{\R^n}\frac{|u(x)-u(y)|^2}{|x-y|^{n+2s}}dy\to |\nabla u(x)|^2
\] 
for smooth functions $u$. Using the notation introduced in the previous Section we can rewrite equation \eqref{harmonicsystem} as
\[
(-\Delta)^s u=u(x)\cB(u,u),
\]
and we note that as $s\to1$ we recover the classic system
\[
-\Delta u=u|\nabla u|^2.
\]

\section{Proof of Theorem \ref{maintheorem}}\label{sectionnonlocalsystems}
In this section we prove our main Theorem \ref{maintheorem} in the spirit of \cite{Caffarelli-Systems}. For the rest of the section we will denote the operator $L_k$ just by $L$. Also, since we are only concerned in the interior regularity we will assume that $\Omega=B_2(0)$. A standard covering argument then will allows us to deduce the interior regularity in any sub domain $\Omega'\subset\Omega$.
%


Recall that our right hand side is controlled by $\cB_K$ and that this operator has the same scaling as $L$ and therefore issues with regularity are expected, since it cannot be absorbed directly by the diffusion. Furthermore note that in the case of strictly smaller scaling one could proceed as in \cite{Serra} to deduce regularity via a blow up argument.

In \cite{Caffarelli-Systems}, one of the main ideas is to prove that $|u|^2$ solves a linear scalar equation. This is true due to the smallness condition on the right hand side (hypothesis (H1.1)) and the known identity $\Delta v^2=2v\Delta v+2|\nabla v|^2$. Then, thanks to the regularity theory for linear operators we can control the oscillation of the solution. More precisely, we will prove that the solution maps $B_1$ into $B_{1-\delta}$ for an appropriate choice of $\delta$.

Let us start with the following observation on the nonlocal operator $L$. Let $v:\R^n\to\R$ be a smooth bounded function, then we claim that
\[
-L v^2(x)=-2v(x)Lv(x)-2\cB_K(v,v).
\]
In fact, 
\begin{align*}
-2v(x)Lv(x)-2\cB(v,v)&=\text{P.V.}\int_{\R^n}(2v(x)(v(x)-v(y))-(v(x)-v(y))^2)K(y-x)dy \\
&=\text{P.V.}\int_{\R^n}(v^2(x)-v^2(y))K(y-x)dy\\
&=-Lv^2(x).
\end{align*}
which is in clear analogy with the local case. We are in shape to state our first lemma.

Another important ingredient in regularity theory is scaling. Let $u$ be a solution of \eqref{generalsystem} and assume that (H1.1), (H1.2) and (H2) hold. Let $u_{\mu,t}(x)=\mu u(tx)$, then we have that $u_{\mu,t}$ solves an analogous system 
\begin{align*}
(-\Delta)^su_{\mu,t}(x)=&\mu t^{2s}f(x,u_{\mu,t},\cB(u_{\mu,t},u_{\mu,t})t^{-2s})\\
&:=\tilde f.
\end{align*}
Hypotheses (H1.1), (H1.2) and (H2) remain valid by changing the constants accordingly,
\begin{align*}
M(u_{\mu,t})&=\mu M(u),\\
b^*(u_{\mu,t})&=\mu^2t^{2s}b^*(u),\\
b(u_{\mu,t})&=\mu t^{2s}b(u),\\
a^*(u_{\mu,t})&=\mu^2a^*(u),\\
a(u_{\mu,t})&=\mu a(u).
\end{align*}
Before we state our first lemma we stress out the fact that we will assume that the solution $u$ to \eqref{generalsystem} is smooth. This can be justified as in \cite{Caffarelli-Silvestre-ek} by a regularization procedure (Lemma 2.1).

\begin{lemma}\label{contractmap}
Let $u$ be a weak solution to \eqref{generalsystem} in $B_2(0)$ satisfying hypotheses (H1.1), (H1.2) and (H2). Assume also that $a=1$, $b=0=b^*$ and that $1/2(a^*+M)=l<1$. Then there exists a constant $0<\delta(l)<1$ such that 
\[
u(B_1(0))\subset B_{M(1-\delta)(\delta\bar u)},
\]
where
\[
\bar u=\frac{1}{|B_1|}\int_{B_1}u dx=\fint_{B_1}udx.
\]
Furthermore $\delta$ is monotone decreasing in $l$.
\end{lemma}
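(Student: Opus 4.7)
The plan is to show that the scalar function
\[
\phi(x) := M^2(1-\delta)^2 - |u(x) - \delta\bar u|^2
\]
is non-negative on $B_1$, which is precisely the assertion $u(B_1)\ss B_{M(1-\delta)}(\delta\bar u)$. First I apply the identity $-L|v|^2 = -2v\cdot Lv - 2\cB_K(v,v)$ (proved immediately before the lemma) with $v = u - \delta\bar u$: since $\delta\bar u$ is constant, $Lv = Lu = -f$ and $\cB_K(v,v) = \cB_K(u,u)$, so
\[
-L\phi = -2(u - \delta\bar u)\cdot f + 2\cB_K(u,u).
\]
Under $a=1$, $b=b^*=0$, (H1.1) gives $|f|\leq \cB_K(u,u)$, hence $|\delta\bar u\cdot f|\leq\delta M\,\cB_K(u,u)$, while (H1.2) gives $u\cdot f\leq a^*\,\cB_K(u,u)$; therefore
\[
-L\phi \geq 2\bigl(1 - a^* - \delta M\bigr)\cB_K(u,u).
\]
Because $l = (a^*+M)/2 < 1$, I choose $\delta = \delta(l)\in(0,1)$ with $a^*+\delta M\leq 1$ (for instance $\delta := (1-l)/(1+M)$), which is strictly positive and monotonically decreasing in $l$. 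With this choice $\phi$ is a (viscosity / weak) super-solution of $-L=0$ in $B_2$.

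Since $|u-\delta\bar u|\leq M(1+\delta)$ pointwise on $\R^n$ we have $\phi\geq -4\delta M^2$, so
\[
\hat\phi := \phi + 4\delta M^2 = M^2(1+\delta)^2 - |u-\delta\bar u|^2
\]
is a globally non-negative super-solution (constants are annihilated by $L$), and the target $\phi\geq 0$ on $B_1$ becomes $\inf_{B_1}\hat\phi\geq 4\delta M^2$. I then invoke the non-local weak Harnack inequality available for operators with kernels in $\cK(\lambda,\Lambda)$,
\[
\Bigl(\fint_{B_1}\hat\phi^{p_0}\Bigr)^{1/p_0} \leq C\,\inf_{B_{1/2}}\hat\phi,
\]
with $p_0 > 0$ and $C > 0$ universal. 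Expanding directly,
\[
\fint_{B_1}\hat\phi = \Bigl(M^2-\fint_{B_1}|u|^2\Bigr) + M^2(2\delta+\delta^2) + (2\delta-\delta^2)|\bar u|^2 \geq 2\delta M^2,
\]
and, combined with the global bound $\hat\phi\leq 4M^2$ via an $L^\infty$--$L^{p_0}$ interpolation, this produces a lower bound of order $\delta M^2$ on $(\fint\hat\phi^{p_0})^{1/p_0}$. After possibly shrinking $\delta$ by a further universal factor (still depending only on $l$ and $n,s,\lambda,\Lambda$), one gets $\inf_{B_{1/2}}\hat\phi\geq 4\delta M^2$, i.e.\ $\phi\geq 0$ on $B_{1/2}$; a standard scaling/covering argument promotes this to $B_1$.

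The main obstacle is the quantitative closing of this last step: since $p_0 < 1$, the Hölder interpolation loses a factor $\delta^{1/p_0}$ and a naive weak Harnack does not suffice. The needed slack comes from a rigidity implicit in the \emph{strict} structural inequality $l<1$: if both $\fint|u|^2 = M^2$ and $|\bar u|=M$ held simultaneously, Jensen's inequality would force $u$ to be constant on $B_1$, making the conclusion trivial; the same configuration combined with the system forces $u\cdot f = \cB_K(u,u)$ via $-L|u|^2=0$, which is incompatible with $a^*<1$ under (H1.1)--(H1.2). A quantitative version of this rigidity, performed as a dichotomy on whether $|\bar u|$ is close to $M$ or $\fint|u|^2$ is bounded away from $M^2$, furnishes the extra slack, and it is this dependence that pins down the explicit form of $\delta(l)$ and its monotonicity.
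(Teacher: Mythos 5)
The proposal diverges from the paper in a way that introduces a gap you yourself flag but do not close, and the gap is real. You build the supersolution $\phi=M^2(1-\delta)^2-|u-\delta\bar u|^2$ (the exact quantity to be shown nonnegative), shift by $4\delta M^2$ to make it globally nonnegative, and try to conclude via a weak Harnack inequality. Even granting an idealized $L^1$ weak Harnack $\inf_{B_{1/2}}\hat\phi\geq c\,\fint_{B_1}\hat\phi$ with a universal $c<1$ (not only the $L^{p_0}$, $p_0<1$, version you correctly worry about), the argument fails: you need $\inf\hat\phi\geq 4\delta M^2$, while your own expansion gives
\[
\fint_{B_1}\hat\phi = \bigl(M^2-\fint_{B_1}|u|^2\bigr)+M^2(2\delta+\delta^2)+(2\delta-\delta^2)|\bar u|^2,
\]
whose only a priori lower bound, when $\fint|u|^2$ is close to $M^2$ and $|\bar u|$ is small, is of order $2\delta M^2$ --- short by a factor of $2/c>2$. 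Shrinking $\delta$ cannot rescue this because the required threshold $4\delta M^2$ and the lower bound on $\fint\hat\phi$ both scale linearly in $\delta$. The dichotomy you sketch (rigidity when $\fint|u|^2\approx M^2$ and $|\bar u|\approx M$ simultaneously) does not cover the problematic regime $\fint|u|^2\approx M^2$ with $|\bar u|$ small, and ruling that regime out is exactly the content the proof needs to supply, not a trivial by-product.

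The paper avoids this entirely by working with the \emph{direction-dependent} auxiliary function
\[
h_\rho(x)=\tfrac{1}{2}M^2+(1-l)M-\tfrac{1}{2}|u(x)|^2-\rho\cdot u(x), \qquad |\rho|\leq 1-l,
\]
which is nonnegative and a supersolution, so the (weak) Harnack gives $h_\rho(x)\geq c_1\bar h_\rho\geq c_1\bigl[(1-l)M-\rho\cdot\bar u\bigr]$ for all $x\in B_1$. One then \emph{chooses $\rho$ at each $x$} to point in the direction of $u(x)$, sets $r=|u(x)|/M$, and a short algebraic manipulation (factor out $1-r$, multiply by $r$ and add $1-r$, complete the square) delivers $|u(x)-\delta\bar u|\leq M(1-\delta)$ with $\delta=\tfrac12 c_2$. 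The crucial ingredient you are missing is this linear-in-$\rho$ term: it upgrades one Harnack estimate into a one-parameter family of pointwise inequalities, and the conclusion comes from optimizing over $\rho$ at each point rather than from a single mean-value bound that has to beat the Harnack constant. Your computation of $-L\phi$ via the identity $-L|v|^2=-2v\cdot Lv-2\cB_K(v,v)$ and the resulting supersolution inequality are correct; the defect is purely in the final Harnack step, and you should replace the direct $\phi$-argument by the paper's $h_\rho$-argument.
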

\begin{proof}
As mentioned before the strategy revolves in using $|u|^2$ as a supersolution of a linear scalar equation. 

First note that
\begin{align*}
L(|u|^2)&=L\left(\sum_i u_i^2\right)\\
&=\sum_i\left(2u_iLu_i+2\cB_K(u_i,u_i)\right)\\
&= -2u\cdot f(x,u,\cB_K(u,u))+2\cB_K(u,u)\\
&\geq 2(1-l)\cB_K(u,u).
\end{align*}
Let $\rho\in\R^m$ with $|\rho|\leq 1-l$ and note that
\begin{align*}
-L(\rho\cdot u)&=\rho\cdot-Lu\\
&=\rho\cdot f(x,u,\cB_K(u,u))\\
&\leq|\rho||\cB_K(u,u)|.
\end{align*}
which leads to \[
-L\left(\frac{1}{2}|u|^2+\rho\cdot u\right)\geq 0
\]
Recall now that $u$ is bounded by $M$, therefore  
\[
h(x)=\frac{1}{2}M^2+(1-l)M-\frac{1}{2}|u|^2-\rho\cdot u,
\]
is nonnegative and furthermore satisfies $-L h\geq 0$, therefore there exists a constant $C=C(\lambda, \Lambda)$ (independent of $s$, see for example \cite{Caffarelli-Silvestre}) such that for all $x,y\in B_1$ we have
\[
h(y)\leq C h(x).
\]
Taking average we conclude then that for all $x\in B_1$ 
\begin{align*}
h(x)&\geq \frac{1}{C}\bar h\\
&\geq c_1 ((1-l)M-\rho\cdot\bar u),
\end{align*}
or equivalently
\begin{align}\label{control1}
\frac{M-|u|^2}{2}+(1-l)M-\rho\cdot u\geq c_1 ((1-l)M-\rho\cdot\bar u). 
\end{align}
Now we are in position to prove the conclusion of the lemma. For this, note that inequality \eqref{control1} allows us to control $u$ and not only $|u|^2$. In fact, take $\rho$ in the direction of $u$ with $|\rho|=1-l$, denote by $\theta$ the angle between $u$ and $\bar u$ and let $r=|u|/M$. With this selection of parameters we get from \eqref{control1}
\[
M(1-r)\left(\frac{1}{2}(M+|u|)+(1-l)\right)\geq Mc_1(1-l)\left(1-\frac{|\bar u|}{M}\cos\theta\right),
\]
which gives us the control on $r$
\begin{align}\label{control2}
1-r\geq c_2\left(1-\frac{|\bar u|}{M}\cos\theta\right).
\end{align}
Note that thanks to the hypothesis $1/2(a^*+M)=l<1$ the constant $c_2$ is uniformly bounded independent on $M$, which we can assume without any loss of generality smaller than 1 ($c_2<1$). Therefore by multiplying \eqref{control2} by $r$ and adding afterwards $1-r$ we arrive to 
\[
1-r^2\geq c_2\left(1-r\frac{|\bar u|}{M}\cos\theta\right),
\]
which is equivalent to 
\[
r^2-c_2r\frac{|\bar u|}{M}\cos\theta\leq 1-c_2.
\]
Note now that $\bar u/M\leq 1$ therefore from the previous inequality we get
\[
r^2-c_2r\frac{|\bar u|}{M}\cos\theta+\left(\frac{1}{2}c_2\frac{\bar u}{M}\right)^2\leq 1-c_2 +\left(\frac{1}{2}c_2\right)^2,
\]
and by picking $\delta=1/2c_2$ we conclude
\[
|u-\delta\bar u|^2\leq M^2(1-\delta)^2.
\]
which finishes the proof.
\end{proof}
The previous lemma states that $u$ maps $B_1(0)$ to a ball of strictly smaller radius and center shifted toward $\bar u$. This result turns out to control the oscillation of the function. Note that the key ingredient is the fact that we can simplify the system to the study of a scalar linear equation.

A direct consequence of the previous lemma is the following corollary.
\begin{corollary}\label{corocontract}
Let $u$ be as in Lemma \ref{contractmap}. Then there exist a sequence of points $\{\rho_k\}$ and radii $\{M_k\}$ such that 
\begin{itemize}
\item[i.-] $M_k\leq M(1-\delta)^k$.
\item[ii.-] $|\rho_k|+M_k\leq M$.
\item[iii.-] $u(B_{2^{-k}}(0))\subset B_{M_k}(\rho_k) $.
\end{itemize}
\end{corollary}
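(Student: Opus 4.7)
The plan is to prove the corollary by induction on $k$, with the inductive step amounting to a single application of Lemma \ref{contractmap} to a rescaled and translated copy of $u$. For the base case $k=0$ I would take $\rho_0 = 0$ and $M_0 = M$, so that (i)--(iii) follow immediately from hypothesis (H2).

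For the inductive step, assume $\rho_k, M_k$ satisfy (i)--(iii), and introduce $w_k(x) = u(2^{-k}x) - \rho_k$. Using the scaling identities already recorded in this section, $w_k$ solves a system of the same type on $B_2$ (in the new variable) with $a = 1$ and $b = b^* = 0$ preserved, and with updated coefficient $a_k^* = a^* + |\rho_k|$: the translation by $\rho_k$ adds $|\rho_k|\cdot a = |\rho_k|$ to $a^*$, and the pure $x$-rescaling leaves $a$ and $a^*$ unchanged. By (iii) one has $\|w_k\|_{L^\infty(B_1)} \le M_k$, so the structural parameter
\[
l_k = \tfrac{1}{2}(a_k^* + M_k) = \tfrac{1}{2}\bigl(a^* + |\rho_k| + M_k\bigr) \le \tfrac{1}{2}(a^* + M) = l < 1,
\]
where the inequality is exactly property (ii). Since $\delta(\cdot)$ is monotone decreasing in $l$, the value $\delta_k := \delta(l_k)$ produced by Lemma \ref{contractmap} satisfies $\delta_k \ge \delta$. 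Applying the lemma to $w_k$ gives $w_k(B_1) \subset B_{M_k(1-\delta_k)}(\delta_k\overline{w_k})$, and setting
\[
M_{k+1} = M_k(1-\delta_k), \qquad \rho_{k+1} = \rho_k + \delta_k\,\overline{w_k},
\]
then undoing the rescaling yields (iii) at level $k+1$. Property (i) follows from $M_{k+1} \le M_k(1-\delta)$ and induction, and for (ii), using $|\overline{w_k}| \le M_k$,
\[
|\rho_{k+1}| + M_{k+1} \le |\rho_k| + \delta_k M_k + M_k(1-\delta_k) = |\rho_k| + M_k \le M.
\]

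The hardest step will be the actual invocation of Lemma \ref{contractmap} on $w_k$: the lemma as proved crucially needs its input to be globally bounded by $M_k$, so that the auxiliary function $h$ is a nonnegative supersolution on all of $\R^n$ to which Harnack applies. However, $w_k$ is only guaranteed to satisfy $|w_k| \le M + |\rho_k|$ outside $B_1$, a quantity that can be as large as $2M$. I expect to resolve this by a truncation argument: replace $w_k$ outside $B_2$ by a suitable extension bounded by $M_k$, producing the same system with a small bounded forcing term controlled by $2M\int_{|z|\ge 1} K(z)\,dz$. Because the rescaling in $x$ contributes a factor $2^{-2ks}$ to this forcing, and because the forcing enters linearly into $b$ and $b^*$ (originally both zero), it can be kept small uniformly in $k$ and absorbed so as not to disturb the strict inequality $l_k < 1$.
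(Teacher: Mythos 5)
Your inductive scheme is essentially the paper's: rescale and translate $u$ to $w_k(x)=u(2^{-k}x)-\rho_k$, apply Lemma \ref{contractmap}, and set $M_{k+1}=M_k(1-\delta)$, $\rho_{k+1}=\rho_k+\delta\bar w_k$. The only cosmetic difference is the base case: you start at $\rho_0=0$, $M_0=M$, while the paper begins one step in, with $\rho_0=\delta\bar u$, $M_0=M(1-\delta)$. Your bookkeeping of the structural constants is actually cleaner than the paper's: you correctly note that the translation by $\rho_k$ shifts $a^*$ to $a^*+|\rho_k|$ (with $a=1$), whereas the paper asserts $a^*_k=a^*$ and cites the scaling remark, which only covers dilations $u\mapsto\mu u(t\cdot)$, not translations. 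In either accounting, the quantity controlled is $a^*_k+\|w_k\|_{L^\infty(B_1)}$, and the net bound $\le a^*+M$ is the same, so this is a presentation issue rather than a mathematical one.

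The genuine concern you raise in your last paragraph, however, is not correctly dispatched. The nonlocal Harnack inequality used in Lemma \ref{contractmap} requires the auxiliary function $h$ to be nonnegative in all of $\R^n$, hence needs $|w_k|\le M_k$ globally, while the induction only provides $|w_k|\le M_k$ on $B_1$ (and $|w_k|\le M+|\rho_k|$ elsewhere). You propose truncating $w_k$ outside $B_2$ and claim the resulting forcing scales like $2^{-2ks}$. This is wrong: the tail of $w_k$ at $|x|>2$ consists of values $u(2^{-k}x)-\rho_k$ with $2^{-k}x$ ranging over essentially all of $\R^n\setminus B_{2^{1-k}}$, where $|u|\le M$ with no smallness in $k$. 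The $2^{-2ks}$ factor from the scaling of the equation applies to the original source $f$, not to the nonlocal contribution of the far field of $w_k$; the truncation error $\int_{|z|\ge1}(|w_k|-M_k)^+K(z)\,dz$ is of order $(1-s)M$ uniformly in $k$, not $2^{-2ks}M$. In other words, the obstacle you correctly identify is not removed by your argument. To be fair, the paper's own proof glosses over exactly this tail issue and does not address it either; a correct treatment would need either a more refined accounting of the dyadic decay of $u-\rho_k$ (using the induction hypothesis at \emph{all} previous scales $j\le k$, which gives a geometric but $\delta$-dependent decay, not $2^{-2ks}$), or a version of the Harnack inequality that tolerates a negative tail with a quantitative $L^1$-weighted tail term.
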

\begin{proof}
We proceed by induction on $k$. Note the case $k=0$ is just Lemma \ref{contractmap} with $\rho_0=\delta\bar u$ and $M_0=M(1-\delta)$. Let $u_k=u(2^{-k}x)-\rho_k$ and assume the result holds. In order to apply Lemma \ref{contractmap} to $u_k$ in $B_1$ we first note that 
\[
\|u_k\|_{L^\infty(B_1(0))}\leq M_k+|\rho_k|\leq M.
\]
Furthermore note that thanks to the bounds of $f$ and the scaling properties of fractional Laplacian $u_k$ solves 
\[
-L u_k=\bar f(x,u_k,\cB_K(u_k,u_k)),
\]
and $\bar f$ satisfies hypotheses (H1.1) and (H1.2) with constants $a^*_k:=a^*(u_k)=a^*$ (see the scaling remark before Lemma \ref{contractmap}). Therefore we can apply Lemma \ref{contractmap} to $u_k$, which finishes the proof by letting $M_{k+1}=M_k(1-\delta)$, $\rho_{k+1}=\rho_k+\delta\bar u_k$.
\end{proof}
Note that, as in the local case, if $\bar u<M$, then there is no need to shift the center of the ball to get an improvement on the $L^\infty$ norm of $u$. Furthermore instead of asking the structural condition $1/2(a^*+M)<1$ in order to apply Corollary \ref{corocontract} we just need $a^*<1$ and 
\[
a^*+\liminf\limits_{r\to0}\fint_{B_r} u<2.
\] 
We need to lift now the extra assumption of Lemma \ref{contractmap}. The following lemma deals with the case of nontrivial $b$ and $b^*$. Without loss of generality let us assume that $b=\max\{b,b^*\}$.
\begin{lemma}\label{contractmapb}
Let $u$ be a weak solution to \eqref{generalsystem} in $B_2(0)$ satisfying hypotheses (H1.1), (H1.2), (H2) and assume also that $a=1$. Then there exists a constant $\tau$ such that 
\[
u(B_1(0))\subset B_{M(1-\delta)+\tau b}(\delta\bar u),
\]
where $\delta$ is the same from Lemma \ref{contractmap}.
\end{lemma}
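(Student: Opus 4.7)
The plan is to retrace the proof of Lemma \ref{contractmap} and track how the inhomogeneities $b$ and $b^*$ perturb the argument; the conceptual change is that the auxiliary function $h$ used there is no longer a true supersolution of $-L$, but only satisfies $-Lh\geq -Cb$, so the pure Harnack step must be replaced by a weak Harnack inequality with bounded right-hand side.

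First I would redo the two elementary computations of Lemma \ref{contractmap}, now using (H1.1) and (H1.2) in their full form. This yields
\[
L(|u|^2)\geq 2(1-l)\cB_K(u,u)-2b^{*},
\]
and, for $\rho\in\R^m$ with $|\rho|\leq 1-l$,
\[
-L(\rho\cdot u)\leq|\rho|\,\cB_K(u,u)+|\rho|b.
\]
Adding these and using $b^{*}\leq b$ (without loss of generality) gives
\[
-L\!\left(\tfrac12|u|^2+\rho\cdot u\right)\geq -Cb,
\]
so that the same nonnegative auxiliary function
\[
h=\tfrac12 M^{2}+(1-l)M-\tfrac12|u|^{2}-\rho\cdot u
\]
used in Lemma \ref{contractmap} now satisfies $-Lh\geq -Cb$ in $B_{2}$ rather than $-Lh\geq 0$.

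Second, I would invoke the $s$-uniform nonlocal weak Harnack inequality for nonnegative functions with bounded right-hand side from \cite{Caffarelli-Silvestre}: for $h\geq 0$ with $-Lh\geq -g$ in $B_2$,
\[
c_{1}\fint_{B_{1}}h\leq h(x)+C_{0}\|g\|_{L^{\infty}(B_{2})},\qquad x\in B_{1}.
\]
Applied here, and bounding $\overline{|u|^{2}}\leq M^{2}$ as in Lemma \ref{contractmap}, this produces
\[
\tfrac{M^{2}-|u(x)|^{2}}{2}+(1-l)M-\rho\cdot u(x)\geq c_{1}\bigl((1-l)M-\rho\cdot\bar u\bigr)-C_{2}b.
\]

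Third, I would run the same algebraic chain as in Lemma \ref{contractmap}: pick $\rho=(1-l)u/|u|$, let $r=|u|/M$ and $\theta$ be the angle between $u$ and $\bar u$, divide, multiply by $r$ and add $1-r$ using $c_{2}<1$, and complete the square. Each manipulation transports the additive term $b$ linearly, ending with
\[
\left|\tfrac{u}{M}-\delta\tfrac{\bar u}{M}\right|^{2}\leq(1-\delta)^{2}+C_{3}b,
\]
for the same $\delta=c_{2}/2$ as in Lemma \ref{contractmap}. Finally, applying $\sqrt{A^{2}+\varepsilon}\leq A+\varepsilon/(2A)$ with $A=1-\delta$ (valid once $b$ is below a fixed threshold; otherwise the claim is vacuous since already $|u-\delta\bar u|\leq 2M$) upgrades this to the linear bound
\[
|u-\delta\bar u|\leq M(1-\delta)+\tau b,
\]
with $\tau=C_{3}M/(2(1-\delta))$.

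The hard part will be securing the nonlocal weak Harnack with bounded right-hand side with a constant $C_{0}$ that is uniform in $s\in(0,1)$, since this is what lets $\tau$ remain bounded as $s\to 1$ and thereby preserves the stability in $s$ built into the rest of the paper; once that ingredient is assembled, every remaining step is a routine bookkeeping of the additive $b$-perturbation through the corresponding step of Lemma \ref{contractmap}.
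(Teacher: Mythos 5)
Your proposal is correct and reaches the stated conclusion, but it takes a genuinely different route from the paper at the Harnack step. You keep the same auxiliary function $h=\tfrac12 M^2+(1-l)M-\tfrac12|u|^2-\rho\cdot u$ used in Lemma \ref{contractmap}, observe that with $b,b^*\neq 0$ it only satisfies $-Lh\geq -Cb$, and then invoke a weak Harnack inequality for nonnegative almost-supersolutions with bounded right-hand side. The paper instead restores a genuine supersolution: it solves an auxiliary Dirichlet problem $-Lv=-1$ in $B_2$, $v=0$ outside, and corrects $h$ by adding $-2bv$; the corrected $h$ satisfies $-Lh\geq 0$, so the plain (no-source) Harnack inequality already used in Lemma \ref{contractmap} applies directly, and the $\|v\|_{L^\infty(B_1)}$ bound on the corrector plays the role your $\|g\|_{L^\infty}$ term plays.

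The two approaches are essentially equivalent in content: the additive $O(b)$ error enters your estimate through the weak-Harnack constant $C_0\|g\|_\infty$, and enters the paper's estimate through $2b\,\bar v\leq 2b\|v\|_\infty$ after taking averages. What the paper's corrector buys is that only the homogeneous Harnack inequality (uniform in $s$, from \cite{Caffarelli-Silvestre}) plus an elementary $s$-uniform barrier bound $\|v\|_{L^\infty(B_1)}\leq L$ are needed, which is a slightly lighter dependency than the full weak Harnack with source that you flag as "the hard part." That said, the weak Harnack with bounded right-hand side is indeed available with $s$-uniform constants in \cite{Caffarelli-Silvestre}, so your route is sound as well. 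Your closing algebra — completing the square to get $|u/M-\delta\bar u/M|^2\leq(1-\delta)^2+C_3b$ and then linearizing via $\sqrt{A^2+\varepsilon}\leq A+\varepsilon/(2A)$ — is a clean way to convert the quadratic bound into the claimed linear form $M(1-\delta)+\tau b$, and is somewhat more explicit than the paper's "proceed as in Lemma \ref{contractmap}" at the corresponding point.
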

Note that the main difference between Lemma \ref{contractmap} and Lemma \ref{contractmapb} is that in the latter we have to take into the account the action of the nontrivial factors.
\begin{proof}
We will proceed as in Lemma \ref{contractmap}, but first we need to add a correcting factor to the function $h$. Let $v$ be the solution to 
\begin{align*}
\begin{cases}
\displaystyle -L v=-1 &\text{ in } B_2,\\
\displaystyle v=0 &\text{ in } \R^n\sm\Omega.
\end{cases}
\end{align*}

Note that $v\leq 0$ by the maximum principle and that it is universally bounded in $B_1$
\[
\|v\|_{L^\infty(B_1(0))}\leq L,
\]
for some $L$.

Define now
\[
h(x)=\frac{1}{2}M^2+(1-l)M-\frac{1}{2}|u|^2-\rho\cdot u-2bv,
\]
which, as in Lemma \ref{contractmap}, is a nonnegative function solving
\[
(-\Delta)^sh\geq 0 \ \text{in } \Omega.
\]
Applying the Harnack inequality to $h$ and taking average we deduce as before
\begin{align*}
h(x)&\geq \frac{1}{C}\bar h\\
&\geq c_1 [(1-l)M-\rho\cdot\bar u-2b\bar v]\\
&\geq c_1 [(1-l)M-\rho\cdot\bar u-2bL].
\end{align*}
Recall now that $v\leq 0$, therefore we have 
\[
\frac{1}{2}M^2+(1-l)M-\frac{1}{2}|u|^2-\rho\cdot u\geq h(x),
\]
and so rearranging the terms as in Lemma \ref{contractmap} we deduce
\begin{align*}
\frac{M-|u|^2}{2}+(1-l)M-\rho\cdot u+2c_1Lb\geq c_1 ((1-l)M-\rho\cdot\bar u). 
\end{align*}
Take $\rho$ in the direction of $u$ with $|\rho|=1-l$, denote by $\theta$ the angle between $u$ and $\bar u$ and let $r=|u|/M$. From the previous inequality we deduce
\[
M(1-r)\left(\frac{1}{2}(M+|u|)+(1-l)\right)+2c_1Lb\geq Mc_1(1-l)\left(1-\frac{|\bar u|}{M}\cos\theta\right).
\]
At this point we can proceed as in the proof of Lemma \ref{contractmap} to deduce the desired conclusion. 
\end{proof}
Since the coefficients $b, b^*$ are nontrivial we note that we no longer have the inclusion of 
\[
B_{M(1-\delta)+\tau b}(\delta\bar u)\subset B_1(0).
\]
This inclusion was crucial in order to prove Corollary \ref{corocontract}, since it allowed us to control
\[
M_k+a^*_k\leq M+a^*,
\]
where $a^*_k$ stands for the corresponding constant $a^*$ associated to $u_k(x)=u(2^{-k}x)-\rho_k$. In order to control now the constants we note that $u_k$ solves the same system \eqref{generalsystem} with the appropriate constant (see the scaling remark at the beginning of the section)
\[
b_k:=b(u_k)\leq 2^{-2sk}(1+|\rho_k|)b,
\]
which will be sufficient to prove that the balls remain within $M+(1-l)$ from the original one. 

We now iterate Lemma \ref{contractmapb} as we did in Corollary \ref{corocontract}. As remarked before we have to take into the account that, a priori, the balls are not contained in the previous one.
\begin{corollary}\label{corocontractb}
Let $u$ as in Lemma \ref{contractmapb}. Then there exists a constant $d=d(l,b)$ and sequence of points vectors $\{\rho_k\}\subset B_M(0)$ and radii $\{M_k\}$ such that
\begin{itemize}
\item[i.-]$M_k\leq M(1-\frac{1}{2}\delta(\frac{1}{2}(1+l)))$
\item[ii.-] $|\rho_k|+M_k\leq M+(1-l)\sum_{i=1}^k2^{-si}$.
\item[iii.-] $u(B_{2^{-(k+d)}})\subset B_{M_k}(\rho_k)$.
\end{itemize}
\end{corollary}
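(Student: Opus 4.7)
The plan is to iterate Lemma \ref{contractmapb} on successively rescaled and re-centered copies of $u$, with a translation offset $d$ chosen large enough once at the beginning so that the inhomogeneous term $b$, whose effective size scales like $2^{-2s(k+d)}$, is absorbed into half of the geometric contraction furnished by the lemma.

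For each $k\geq 0$ I would set $v_k(x):=u(2^{-(k+d)}x)-\rho_k$ and aim to maintain the invariant $\|v_k\|_{L^\infty(B_1)}\leq M_k$. Using the scaling relations recorded just before Lemma \ref{contractmap}, together with the identity $(v_k+\rho_k)\cdot f\leq a^*\cB(u,u)+b^*+|\rho_k|\,|f|$ (which transfers the constants in (H1.1)--(H1.2) to the translated function), $v_k$ solves a system of the same type as \eqref{generalsystem} with
\[
a(v_k)=1,\qquad a^*(v_k)\leq a^*+|\rho_k|,\qquad b(v_k),\,b^*(v_k)\leq 2^{-2s(k+d)}(1+|\rho_k|)b.
\]
The point of invariant (ii) is that the effective structural parameter for $v_k$ remains uniformly subcritical: as long as $|\rho_k|+M_k\leq M+(1-l)\sum_{i=1}^{k}2^{-si}$, one has $\tfrac12\bigl(a^*(v_k)+\|v_k\|_{L^\infty}\bigr)\leq l':=\tfrac12(1+l)<1$, and Lemma \ref{contractmapb} is applied at this relaxed parameter $l'$ from the very beginning. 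Writing $\delta':=\delta(l')$, this is precisely where the contraction factor $\tfrac12\delta(\tfrac12(1+l))$ appearing in (i) originates.

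The inductive step is then straightforward. Assuming (i)--(iii) up to level $k$, Lemma \ref{contractmapb} gives
\[
v_k(B_1)\subset B_{M_k(1-\delta')+\tau b(v_k)}\bigl(\delta'\bar v_k\bigr),
\]
and I would set $\rho_{k+1}:=\rho_k+\delta'\bar v_k$, $M_{k+1}:=M_k(1-\delta')+\tau b(v_k)$; undoing the rescaling immediately delivers (iii) at level $k+1$. The offset $d=d(l,b)$ is fixed at the outset so that $\tau\cdot 2^{-2sd}(1+M)b\leq \tfrac12\delta' M$, which, combined with the $2^{-2sk}$ decay of $b(v_k)$ and the inductive bound on $M_k$, yields $\tau b(v_k)\leq \tfrac12\delta' M_k$, hence $M_{k+1}\leq M_k(1-\tfrac12\delta')$ and (i). For (ii), one uses the telescoping $|\rho_{k+1}|\leq |\rho_k|+\delta'|\bar v_k|\leq |\rho_k|+\delta' M_k$, and $\delta' M_k$ is bounded by $(1-l)2^{-s(k+1)}$ after matching $d$ accordingly.

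The principal obstacle is conceptual rather than computational: re-centering by $\rho_k$ inflates the effective $a^*$ of $v_k$, so the structural condition $\tfrac12(a^*+M)<1$ cannot simply be reused as in Corollary \ref{corocontract}. The fix, which is the whole reason the relaxed parameter $(1+l)/2$ appears in the statement, is to build the necessary slack into the contraction factor from the start; after that, the choice of $d$ absorbs the $b$--correction into half of $\delta'$ and the iteration closes by induction.
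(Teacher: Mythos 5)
Your overall architecture is exactly the paper's: iterate Lemma \ref{contractmapb} on rescaled (and re-centered) copies of $u$, relax the structural parameter to $l'=\tfrac12(1+l)$ so that the inflation of $a^*+\|\cdot\|_\infty$ caused by the re-centering stays subcritical, and use the offset $d$ to make the $b$-correction decay fast enough that it is absorbed into the contraction. The paper phrases the re-centering slightly differently (it does not subtract $\rho_k$ explicitly, but compensates by letting $M(u_{k+1})\leq M_k+|\rho_k|$), while your version subtracts $\rho_k$ and transfers the translation into an enlarged $a^*(v_k)\leq a^*+|\rho_k|$; arithmetically the two are the same and both give $\tfrac12(a^*+M_k+|\rho_k|)\leq l'$.

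There is, however, a concrete gap in your verification of (ii), and a related under-specification of $d$. After the step, the correct telescoping quantity is the \emph{sum}
\[
|\rho_{k+1}|+M_{k+1}\;\leq\;\bigl(|\rho_k|+\delta' M_k\bigr)+\bigl(M_k(1-\delta')+\tau b(v_k)\bigr)\;=\;|\rho_k|+M_k+\tau b(v_k),
\]
so the increment you must control geometrically is $\tau b(v_k)$, not $\delta' M_k$. The claim ``$\delta' M_k\leq (1-l)2^{-s(k+1)}$'' is false in general: $M_k$ decays like $(1-\tfrac12\delta')^k$, and since $\delta'$ is dictated by Lemma \ref{contractmap} (and can only be taken smaller, not larger), one cannot force $(1-\tfrac12\delta')^k\leq 2^{-sk}$. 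Moreover, even if it held it would only bound $|\rho_k|$, not $|\rho_k|+M_k$, so it would not close invariant (ii). What is actually true and needed is $\tau b(v_k)\leq (1-l)2^{-s(k+1)}$, which follows from $b(v_k)\leq 2^{-2s(k+d)}(1+M)b$ \emph{provided} $d$ is also chosen so that $\tau\,2^{-2sd}(1+M)b\leq 1-l$. Your choice of $d$ imposes only the single constraint $\tau\,2^{-2sd}(1+M)b\leq\tfrac12\delta' M$, which handles (i) but has no a priori relation to $1-l$. This is precisely why the paper takes $2^{-d}b\tau(1+M)\leq\min\{\,1-l,\ \tfrac{2^s-1}{2^s}M\delta\,\}$: one term of the minimum closes (i), the other closes (ii). Adding the missing constraint on $d$ and replacing $\delta' M_k$ by $\tau b(v_k)$ in the telescope for (ii) repairs the argument and brings it in line with the paper's.
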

\begin{proof}
Without loss of generality we can assume that $\tau\geq 1$, $M>1/2$ and $\delta<1/2^s$ and that $b=\max\{b,b^*\}$. 

Let us pick $d$ large enough so that
\[
2^{-d}b\tau(1+M)\leq\min\left\{1-l,\frac{2^s-1}{2^s}M\delta\right\}.
\]
Note in particular that 
\[
\frac{2^s-1}{2^s}\leq \frac{1}{2}.
\]
We will prove the result by induction. For the initial step we apply Lemma \ref{contractmapb} to the function $u_0(x)=u(2^{-d}x)$ to get that $u_0(B_1)\subset B_{M(1-\delta)+\tau b_0(\delta \bar u_0)}$. We translate the inclusion to 
\[
u_0(B_1(0))=u(B_{2^{-d}}(0))\subset B_{M(1-\delta)+\tau b_0}(\delta \bar u_0).
\] 
Define $\rho_0=\delta \bar u_0$ and $M_0=M(1-\delta)+\tau b_0$ and let us check the conditions. Since $M(u_0)=M$ then we have that $\rho_0\in B_{\delta M}\subset B_M$. Furthermore since $b(u_0)=2^{-2sd}b$ we get that ($s\in[1/2,1)$)
\begin{align*}
M_0=2^{-2sd}b\tau+(1-\delta)M&\leq 2^{-d}b\tau+(1-\delta)M\\
&\leq \min\{1-l,1/2M\delta\} +(1-\delta)M\\
&\leq M.
\end{align*}
Finally note that
\begin{align*}
\rho_0+M_0&\leq\delta M+\min\{1-l,1/2M\delta\} +(1-\delta)M\\
&\leq M+(1-l),
\end{align*}
which finishes the initial step. 

Assume now that the result is valid for $k$ and define $u_{k+1}(x)=u(2^{-(k+1+d)}x)=u_0(2^{-(k+1)}x)$. Note that $a^*_{k+1}:=a^*(u_{k+1})=a^*$ and since $u_{k+1}(B_2(0))=u_k(B_1(0))\subset B_{M_k}(|\rho_k|)$ therefore we have
\begin{align*}
a^*_{k+1}+M(u_{k+1})&\leq a^*+M_k+|\rho_k|\\
&\leq a^*+M+(1-l)\leq 1+l.
\end{align*}
Furthermore we also have $b(u_k)=2^{-2s(k+d)}b\leq 2^{-2sk}\min\{1-l,\frac{1}{2}M\delta\}$. Now, Lemma \ref{contractmapb} applied to $u_{k+1}$ gives us
\[
u_{k+1}(B_1(0))\subset B_{M_k(1-\delta)+\tau b_k}(\delta \bar u_{k+1}).
\]
Also Lemma \ref{contractmapb} gives us a point $\rho_{k+1}$ lying in the segment $\rho_k$ and $\delta \bar u_k$ and hence in $B_M$ and a radius
\begin{align*}
M_{k+1}&\leq M_k(1-\delta)+\tau b_k\\
&\leq M\left(1-\frac{1}{2^s}\delta\left[\frac{1}{2}(1+l)\right]\right)^k(1-\delta)+2^{-2sk}M\frac{2^s-1}{2^s}M\delta\\
&\leq M\left(1-\frac{1}{2^s}\delta\right)^k(1-\delta)+2^{-2sk}M\frac{2^s-1}{2^s}M\delta.
\end{align*}
Since $\delta\leq 1/2^s$ we have that $1-\delta/2^s\geq 2^{-2s}$, 
\[
2^{-2sk}\leq \left((1-\frac{1}{2^s}\delta\right)^k
\]
Since
\[
\frac{2^s-1}{2^s}\delta\leq \left(1-\frac{1}{2^s}\right).
\]
we conclude then
\begin{align*}
M_{k+1}&\leq M\left(1-\frac{1}{2^s}\delta\right)^k(1-\delta)+\left(1-\frac{1}{2^s}\delta\right)^k \frac{2^s-1}{2^s}M\delta\\
&\leq M\left(1-\frac{1}{2^s}\delta\right)^{k+1}.
\end{align*}
Finally we estimate
\begin{align*}
|\rho_{k+1}|+M_{k+1}&\leq |\rho_k|+\delta\bar u_{k+1}+M_k(1-\delta)+\tau b_k\\
&\leq|\rho_k|+M_k+\tau b_k\\
&\leq M+(1-l)\sum_{i=1}^k2^{-si}+\tau b2^{-2s(k+1+d)}\\
&\leq M+(1-l)\sum_{i=1}^{k+1}2^{-si},
\end{align*}
which finishes the proof. 
\end{proof}
Since the oscillation decreases at every step the conclusion of the Theorem \ref{maintheorem} now follows in a standard way. 

\begin{remark}
As pointed out before, we can replace the structural condition
\[
a^*+aM\leq 2,
\]
by $a\leq 1$ and
\[
a^*+a\liminf\limits_{r\to 0}\left| \fint_{B_r(x_0)}u\right|\leq 2
\]
to conclude that $u$ is H\"older continuous in a neighborhood of $x_0$.
\end{remark}

\section{A non regular example when $aM+a^*=2$}\label{sectionextension}
In this section we provide an example of a non regular solution when the structural condition is not satisfied. In the local case the harmonic map to the unitary sphere provides the non smooth solution $\Phi=x/|x|:\R^n\to\S^{n-1}$ to 
\[
-\Delta \Phi=\Phi|\nabla \Phi|^2,
\]
for general dimensions $n$. In the particular case $n=1$ the function $\Phi$ also solves
\[
(-\Delta)^s\Phi=\Phi\cB(\Phi,\Phi),
\]
for all $s\in (0,1)$. In fact, since $\Phi$ is just the sign function we have for $x,y\in\R\sm\{0\}$
\begin{align*}
\Phi(x)(\Phi(x)-\Phi(y))^2&=\Phi(x)(\Phi^2(x)-2\Phi(x)\Phi^(y)+\Phi^2(y))\\
&=2\Phi(x)(1-\Phi(x)\Phi(y))\\
&=2\Phi(x)(\Phi(x)-\Phi(y)),
\end{align*}
where we used the fact that $\Phi^2=1$. Therefore the following formal computation
\begin{align*}
\cB(\Phi,\Phi)&=\frac{c_{n,s}}{2}\int_{\R^n}\frac{\Phi(x)-\Phi(y))^2}{|x-y|^{1+2s}}dy\\
&=\frac{c_{n,s}}{2}\int_{\R^n}\frac{2(\Phi(x)-\Phi(y))}{|x-y|^{1+2s}}dy\\
&=(-\Delta)^s\Phi(x),
\end{align*}
concludes the claim.

In this case, we have a non smooth solution to the system in the case $aM+a^*=2$. We point out that the previous formal computation can be justified by taking a smooth approximation of the sign function $\Phi_n$ such that $\Phi_n=\Phi$ for $x\in\R\sm(-1/n,1/n)$.

We point out that for general dimensions $x/|x|$ fails to solves the fractional harmonic system. This is mainly due to the fact that for dimensions greater than 1 there is a nonlocal interaction with the coordinates and therefore the projection to the sphere fails to solve the nonlocal system. A counterexample for general dimensions is still open for the nonlocal case.

Let us give a brief remark on the passage to the limit as $s\to1$. As noted in \cite{Caffarelli-Silvestre} we have that for a smooth function $v$
\[
\lim\limits_{s\to1}\frac{c_{n,s}}{2}\int_{\R^n}\frac{v(x+y)+v(x-y)-2v(x)}{|y|^{n+2s}}=\Delta v(x)
\]
and therefore changing $z=Ay$ we deduce
\[
\lim\limits_{s\to1}\frac{c_{n,s}}{2}\int_{\R^n}\frac{v(x+y)+v(x-y)-2v(x)}{\text{det}A|A^{-1}z|^{n+2s}}dz=\sum a_{ij}v_{ij}(x),
\]
where $a_{ij}=AA^t$. With this fact we can recover a priori estimates for (viscosity) solutions to systems of the form
\[
Lu=f(x,u,\nabla u),
\]
where $L=\sum a_{ij}\partial_{ij}$.

On the other hand, given a kernel $K\in\cK$ the operator $L_K$
\[
L_K=\int_{\R^n}(v(y)-v(x))K(y-x)dy
\]
is the Euler Lagrange equation of the energy integral
\begin{align*}
\cE_{K,s}(u)=&\iint_{\R^{2n}}|u(x)-u(y)|^2K(x-y)dxdy.
\end{align*} 
Since the associated energy converges to the classical Dirichlet energy, weak solutions to the fractional equations will converge to classic divergence type equations.

The previous assertion still holds for more general operators of the form
\[
Lu=\int_{\R^n}(u(y)-u(x))K(x,y)dy
\]
under symmetry assumptions $K(x,y)=K(y,x)$ and satisfying bounds like \eqref{boundK} uniformly in $x$. The associated energy here is simply given by
\[
\iint_{\R^{2n}}|u(x)-u(y)|^2|x-y|^{n+2s}K(x,y)dxdy.
\]
With this in mind and since Theorem \ref{maintheorem} is stable in $s$ we recover the a priori H\"older estimates for (weak) solutions as in \cite{Caffarelli-Systems}.

\noindent {\bf Acknowledgements.} 

L. Caffarelli was partially supported by NSF grant DMS-1160802 and NSF grant DMS-1540162

G. D\'avila was partially supported by Fondecyt grant No. 11150880.


\begin{thebibliography}{00}


\bibitem{Caffarelli-Systems}
Caffarelli, L. A. {\em Regularity theorems for weak solutions of some nonlinear systems.} Comm. Pure Appl. Math. 35 (1982), no. 6, 833-838.


\bibitem{Caffarelli-Silvestre}
Caffarelli, Luis; Silvestre, Luis. {\em Regularity theory for fully nonlinear integro-differential equations.} Comm. Pure Appl. Math. 62 (2009), no. 5, 597-638.

\bibitem{Caffarelli-Silvestre-ek}
Caffarelli, Luis; Silvestre, Luis. {\em The Evans-Krylov theorem for non local fully non linear equations.} Annals of Mathematics (2011), no. 2, 1163-1187.

\bibitem{Hitchhikers}
Di Nezza, Eleonora; Palatucci, Giampiero; Valdinoci, Enrico. {\em Hitchhiker's guide to the fractional Sobolev spaces.} Bull. Sci. Math. 136 (2012), no. 5, 521-573.

\bibitem{H-K-W}
Hildebrandt, Stefan; Kaul, Helmut; Widman, Kjell-Ove. {\em An existence theorem for harmonic mappings of Riemannian manifolds.} Acta Math. 138 (1977), no. 1-2, 1-16. 

\bibitem{Hildebrandt-Widman}
Hildebrandt, Stefan; Widman, Kjell-Ove. {\em On the H\"older continuity of weak solutions of quasilinear elliptic systems of second order.} Ann. Scuola Norm. Sup. Pisa Cl. Sci. (4) 

\bibitem{Millot-Sire}
Millot, Vincent; Sire, Yannick. {\em On a fractional Ginzburg-Landau equation and 1/2-harmonic maps into spheres.} Arch. Ration. Mech. Anal. 215 (2015), no. 1, 125-210.

\bibitem{Serra}
Serra, Joaquim. {\em $C^{\sigma+\alpha}$ regularity for concave nonlocal fully nonlinear elliptic equations with rough kernels.} Calc. Var. Partial Differential Equations 54 (2015), no. 4, 3571-3601.  

\bibitem{Wiegner}
Wiegner, Michael. {\em Ein optimaler Regularit\"atssatz f\"ur schwache L\"osungen gewisser elliptischer Systeme.} Math. Z. 147 (1976), no. 1, 21-28.

\end{thebibliography}
\end{document}